\makeatletter\@ifundefined{pdfpagewidth}{}{\pdfpagewidth=21.0cm\pdfpageheight=29.7cm}\makeatother 
\let\orig@item=\@item \def\@item[#1]{\orig@item[\rm #1]}
\theoremstyle{plain}
\newtheorem{theorem}{Theorem}[section]
\newtheorem{lemma}[theorem]{Lemma}
\newtheorem{conjecture}[theorem]{Conjecture}
\newtheorem{corollary}[theorem]{Corollary}
\theoremstyle{definition}
\newtheorem{definition}[theorem]{Definition}
\newtheorem{example}[theorem]{Example}
\newtheorem{remark}[theorem]{Remark}
\newcommand\be{\begin{eqnarray*}}
\newcommand\ee{\end{eqnarray*}}
\renewcommand\ge{\geqslant}
\renewcommand\geq{\geqslant}
\renewcommand\le{\leqslant}
\renewcommand\leq{\leqslant}
\let\hat=\widehat
\newcommand\C{\mathbb C}
\renewcommand\P{\mathbb P}
\newcommand\cali{{I}}  
\newcommand\calj{{J}}  
\newcommand\newop[2]{\def#1{\mathop{\rm #2}\nolimits}}
\newop\Ass{Ass}
\newop\mult{mult}
\newcommand\eqnref[1]{(\ref{#1})}
\newcommand\rounddown[1]{\left\lfloor#1\right\rfloor}
\newcommand\m{\mathfrak{m}}
\begin{document}

  \title{The effect of points fattening in dimension three}
  \author{Th.~Bauer$^1$ and T.~Szemberg$^2$}
  \date{January 12, 2014}
  \footnotetext[1]{Partially supported by DFG grant BA 1559/6-1}
  \footnotetext[2]{Partially supported by NCN grant UMO-2011/01/B/ST1/04875}
  \maketitle


\begin{abstract}
   There has been increased recent interest in understanding the
   relationship between the symbolic powers of an ideal and the
   geometric properties of the corresponding variety. While a
   number of results are available for the two-dimensional case,
   the higher-dimensional case is largely unexplored. In the
   present paper we study a natural conjecture arising from a
   result by Bocci and Chiantini. As a first step towards
   understanding the higher-dimensional picture, we show that
   this conjecture is true in dimension three. Also, we provide
   examples showing that the hypotheses of the conjecture may not
   be weakened.
\end{abstract}


\section*{Dedication}

   This article is dedicated to Robert Lazarsfeld on the
   occasion of his 60th
   birthday.


\section{Introduction}\label{sec:intro}

   The study of the effect of points fattening
   has been initiated by Bocci and Chiantini
   in \cite{BocCha11}. Roughly speaking, they consider the radical ideal $\cali$
   of a finite set $Z$ of points in the projective plane, its second
   symbolic power $\cali^{(2)}$, and deduce from the comparison of algebraic
   invariants of these two ideals various geometric properties of the set $Z$.
   Along these lines Dumnicki, Tutaj-Gasi\'nska and the second author studied
   in \cite{DST13} higher symbolic powers of~$\cali$. Similar problems
   were studied in \cite{Lanckorona} in the bi-homogeneous setting of ideals
   defining finite sets of points in $\P^1\times\P^1$.

   It is a natural task to try to generalize the result of
   Bocci and Chiantini \cite[Theorem 1.1]{BocCha11}
   to the higher-dimensional setting. Denoting for a homogeneous ideal
   $\cali$ by $\alpha(\cali)$ its \emph{initial degree}, i.e., the least
   degree $k$ such that $(\cali)_k\neq 0$,
   a natural generalization reads as follows:

\begin{conjecture}\label{conj:main}
   Let $Z$ be a finite set of points in projective space $\P^n$ and let
   $\cali$ be the radical ideal defining $Z$. If
   \begin{equation}\label{eq:jumps by 1}
      d:=\alpha(\cali^{(n)})=\alpha(\cali)+n-1
      \,,
   \end{equation}
   then either
   \begin{itemize}
      \item[] $\alpha(\cali)=1$, i.e., $Z$ is contained in a single hyperplane $H$ in $\P^n$
   \end{itemize}
   or
   \begin{itemize}
      \item[] $Z$ consists of all intersection points (i.e., points where $n$ hyperplanes meet)
      of a general configuration of $d$ hyperplanes in $\P^n$, i.e., $Z$ is a \emph{star configuration}.
      For any polynomial in $\cali^{(n)}$ of degree $d$, the corresponding hypersurface
      decomposes into $d$ such hyperplanes.
   \end{itemize}
\end{conjecture}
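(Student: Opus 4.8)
The plan is to treat the case $n=3$ directly in $\P^3$, using the two-dimensional theorem of Bocci and Chiantini \cite{BocCha11} as the engine of an induction on $d$. Throughout write $a=\alpha(\cali)$, so that the hypothesis reads $\alpha(\cali^{(3)})=a+2$; if $a=1$ we land in the first alternative, so I assume $a\ge 2$. First I would record that each step up in symbolic power strictly raises the initial degree: differentiating a form of $\cali^{(m)}$ in a suitable direction produces a nonzero form of $\cali^{(m-1)}$ of degree one less, whence $\alpha(\cali^{(m)})\ge\alpha(\cali^{(m-1)})+1$. Applying this twice between $\cali$ and $\cali^{(3)}$ and using $\alpha(\cali^{(3)})=a+2$ forces the chain to be tight:
\[
   \alpha(\cali)=a,\qquad \alpha(\cali^{(2)})=a+1,\qquad \alpha(\cali^{(3)})=a+2,
\]
so I may fix minimal forms $G_1\in(\cali)_a$, $G_2\in(\cali^{(2)})_{a+1}$ and $F\in(\cali^{(3)})_{d}$ with $d=a+2$.

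Next I would extract the numerical backbone by a B\'ezout computation in $\P^3$. If the surfaces $V(G_1),V(G_2),V(F)$ meet properly, then the total intersection number is $a(a+1)(a+2)=6\binom{d}{3}$, while at each $p\in Z$ the local intersection multiplicity is at least $\mult_p(G_1)\mult_p(G_2)\mult_p(F)\ge 1\cdot 2\cdot 3=6$. Hence
\[
   |Z|\le\binom{d}{3},
\]
which is exactly the number of triple points of a general arrangement of $d$ planes, and in the extremal situation every local intersection multiplicity must equal the product of the multiplicities. The decisive and delicate step is to convert this rigidity into a geometric splitting: I would argue that $G_1,G_2,F$ cannot meet in the expected transverse fashion without $V(F)$ acquiring a hyperplane component, so that $F=\ell\cdot F_1$ with $\ell$ a linear form cutting out a plane $H$.

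I would then peel. Write $Z=Z_H\sqcup Z'$ with $Z_H=Z\cap H$ and $Z'=Z\setminus H$. Along $Z'$ the factor $F_1$ still has multiplicity $3$ and has degree $a+1$, so $Z'$ again satisfies the jump-by-one hypothesis in $\P^3$ with $\alpha$ dropped by one, and the inductive hypothesis identifies $Z'$ as the star configuration of the remaining $d-1$ planes. Along $Z_H$ the restriction $F_1|_H$ is a plane curve of degree $a+1$ that is double at each point of $Z_H$; this is precisely the two-dimensional jump-by-one situation, so the theorem of Bocci and Chiantini \cite{BocCha11} applies inside $H\cong\P^2$ and presents $Z_H$ as a star of $d-1$ lines, namely the traces on $H$ of the other $d-1$ planes. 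Reassembling the two descriptions shows that $Z$ is the star configuration of the $d$ planes $H,H_1,\dots,H_{d-1}$ and that $F=\ell\cdot F_1$ is, up to a scalar, the product of their defining linear forms; the uniqueness of the degree-$d$ element of $\cali^{(3)}$ then follows from the same count.

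I expect the main obstacle to be the passage from the B\'ezout inequality to the actual splitting of $V(F)$, that is, ruling out common components among $V(G_1),V(G_2),V(F)$ and showing that the extremal bound genuinely forces a linear factor rather than merely constraining $|Z|$. A secondary difficulty is the bookkeeping needed to verify that the residual data $(Z',F_1)$ and $(Z_H,F_1|_H)$ satisfy the exact $\alpha$-equalities required to run the space-induction and the plane-theorem in tandem, so that the two halves of the argument fit together into a single star configuration.
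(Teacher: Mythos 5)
Your outline resembles the paper's proof at the coarsest level (tight chain of $\alpha$-jumps via differentiation, a B\'ezout count against $6\binom{d}{3}$, peeling off a component, induction on $d$), but the step you yourself flag as ``decisive and delicate'' is a genuine gap, and it sits exactly where your route breaks down. Your B\'ezout computation requires $V(G_1)$, $V(G_2)$, $V(F)$ to meet properly, and this fails precisely in the situation the theorem characterizes: for a star configuration of $d$ planes, minimal elements of $\cali$, $\cali^{(2)}$, $\cali^{(3)}$ are unions of $d-2$, $d-1$ and $d$ of those planes, so the three surfaces share many components and no proper-intersection count is available. Moreover, even when the intersection is proper, equality in B\'ezout with local contributions $1\cdot 2\cdot 3=6$ is a pointwise transversality statement; it does not globalize into a linear factor of $F$, so the asserted splitting $F=\ell\cdot F_1$ has no mechanism behind it. The paper circumvents both problems at once: it assumes for contradiction that the degree-$d$ surface $X_3\in\cali^{(3)}$ is \emph{irreducible}, so that it automatically meets the directional-derivative surface $\Lambda_P$ of Lemma~\ref{lem:one mult remains} properly in a curve, and then intersects with a surface $Y$ of degree $d-2$ through $Z$ chosen to avoid every component of that curve. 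The existence of such a $Y$ rests on an input your sketch lacks entirely: the regularity of $\cali$ is $d-2$, so the forms of degree $d-2$ in $\cali$ have base locus exactly $Z$ (adapting \cite[Proposition 3.1]{HaHu}). This yields only that $X_3$ is \emph{reducible}; the split-off component $\Gamma$ is taken of minimal degree, not assumed to be a plane, and its planarity is deduced \emph{after} the induction from the inductive description of the residual surface.

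Two further pieces are missing. First, the paper normalizes $\#Z=\binom{d}{3}$ at the outset via Lemma~\ref{lem:reduction} (selecting $\binom{d}{3}$ points imposing independent conditions on forms of degree $d-3$, with Lemma~\ref{lem:the only one} recovering $W=Z$ at the end); in your proposal the bound $|Z|\le\binom{d}{3}$ is itself the unproven B\'ezout step, so you have no way to exclude extra points of $Z$, nor to carry out the final identification and the uniqueness claim, which the paper gets from the polar-system induction of Lemma~\ref{lem:the only one} rather than ``the same count.'' Second, your planar half does not close: to run Bocci--Chiantini on $Z_H=Z\cap H$ you need $\alpha_{\P^2}(\cali_{Z_H})\ge d-2$ inside $H$, and unlike the space-side bound $\alpha(\cali_{Z'})\ge d-3$ (where a low-degree surface through $Z'$ plus the plane $H$ contradicts $\alpha(\cali)=d-2$), a plane curve of degree $d-3$ through $Z_H$ cannot be completed by anything to a degree-$(d-3)$ surface through all of $Z$, so no contradiction is available from the hypotheses alone. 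The paper never applies the two-dimensional theorem to $Z_H$; instead it reruns the three-dimensional induction for \emph{every} plane component of $X_3$, rules out triple intersection lines, and finishes by counting the points of multiplicity $3$ against $\#Z=\binom{d}{3}$.
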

   The term \emph{general configuration} in the conjecture means simply that no more
   than $n$ hyperplanes meet in one point. This is equivalent to
   \emph{general linear position} for points in the dual projective
   space corresponding to
   the hyperplanes in the configuration.
   The result of Bocci and Chiantini is the case $n=2$ of this conjecture.
   As a first step towards understanding
   the higher-dimensional picture,
   we show in the present paper:

\begin{theorem}\label{thm:main}
   The conjecture is true for $n=3$.
\end{theorem}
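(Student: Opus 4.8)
The plan is to exploit that the hypothesis places us at the boundary of a general lower bound, and then to reduce the spatial problem to the planar theorem of Bocci and Chiantini, i.e.\ to the case $n=2$ of the conjecture, which we may assume. First I would record the inequality $\alpha(\cali^{(m)})\ge\alpha(\cali)+m-1$, valid for any radical ideal of points: taking a nonzero form $F$ of minimal degree in $\cali^{(m)}$ and differentiating, a partial derivative of a form vanishing to order $\ge m$ at a point vanishes there to order $\ge m-1$, so some nonzero partial lies in $\cali^{(m-1)}$ and has degree one less; iterating $m-1$ times lands a nonzero form in $\cali$. With $m=n=3$ this gives $\alpha(\cali^{(3)})\ge\alpha(\cali)+2$, so that \eqnref{eq:jumps by 1} is exactly the extremal case. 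Write $a:=\alpha(\cali)$ and $d:=a+2$, and fix a nonzero $F\in(\cali^{(3)})_d$; thus $V(F)$ is a surface of degree $d$ with $\mult_pF\ge3$ at every $p\in Z$. If $a=1$ then $Z$ lies on a plane and we are in the first alternative, so from now on assume $a\ge2$.

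The heart of the argument is to prove that $V(F)$ is a union of $d$ planes. To produce a first plane component I would argue by a B\'ezout and dimension count: if a plane $\Pi$ carries sufficiently many points of $Z$, then the plane curve $F|_\Pi$ of degree $d$ would be forced to be triple at too many points and must therefore vanish identically, i.e.\ $\Pi\subseteq V(F)$. Having split off a linear factor, $F=\ell\cdot F_1$, I would separate $Z$ into the points $Z\cap\Pi$ lying on the plane and the residual points $Z'$ off it; for $p\in Z'$ one has $\mult_pF_1\ge3$, so $F_1$ witnesses the analogous extremal relation for $Z'$ with $d$ replaced by $d-1$, setting up an induction on $d$. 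Simultaneously the points on $\Pi\cong\P^2$ are governed by the planar theorem applied to the curve $F_1|_\Pi$, which is double along $Z\cap\Pi$; this is precisely where the $n=2$ case of the conjecture enters.

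Once it is known that $F=\ell_1\cdots\ell_d$ is a product of $d$ linear forms, it remains to show that the planes $H_i=V(\ell_i)$ are distinct and in general position and that $Z$ consists of \emph{all} of their triple points. Distinctness and the general-position condition (no four planes meeting in a point) follow by comparing the multiplicity forced at an intersection point of the $H_i$ with both $\mult_pF\ge3$ and the value $a=d-2$ of the initial degree: a product of any $d-2$ of the planes vanishes at every triple point and so lies in $\cali$, which pins $Z$ down to be exactly the star configuration and shows that every degree-$d$ element of $\cali^{(3)}$ factors into these $d$ planes.

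The step I expect to be the main obstacle is the production of the first plane component, together with the bookkeeping that keeps the induction extremal. One has to guarantee that some plane really does contain enough points of $Z$ to be absorbed into $V(F)$, and that after splitting off $\ell$ both the residual set $Z'$ in $\P^3$ and the trace $Z\cap\Pi$ in $\P^2$ still satisfy the hypotheses needed to invoke, respectively, the inductive statement and the Bocci--Chiantini theorem. Controlling the interaction between these two pieces, and ruling out degenerate intermediate configurations, is the crux of the proof.
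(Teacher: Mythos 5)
There is a genuine gap at exactly the point you flag yourself: the production of the first plane component. Your argument is conditional --- ``if a plane $\Pi$ carries sufficiently many points of $Z$, then $F|_\Pi$ vanishes identically'' --- but nothing in your sketch guarantees that such a plane exists, and there is no direct way to establish it at the outset: before one knows that $Z$ is a star configuration, no plane need contain more than a few points of $Z$, and a degree-$d$ plane curve can accommodate up to roughly $d(d-1)/6$ triple points, so the absorption argument has nothing to bite on. The paper circumvents this entirely: it never tries to find a plane first. Instead it proves the weaker statement that the degree-$d$ surface $X_3$ with triple points along $Z$ is merely \emph{reducible}, and this requires three ingredients your proposal lacks: (a) a reduction lemma allowing one to assume $\#Z=\binom{d}{3}$ exactly (whose part (iii) in turn needs a uniqueness statement for the degree-$d$ hypersurface triple along a star configuration); (b) the observation that some directional derivative of the equation of $X_3$ gives a surface $\Lambda_P$ of degree $d-1$ still having multiplicity $3$ at a chosen $P\in Z$; and (c) a regularity result (adapted from Harbourne--Huneke) showing the system of degree-$(d-2)$ forms through $Z$ has base locus exactly $Z$, so one can choose $Y$ in it avoiding every component of the curve $X_3\cap\Lambda_P$. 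B\'ezout on the three surfaces then gives $d(d-1)(d-2)\geq 6\left(\binom{d}{3}-1\right)+9$, a contradiction. Only \emph{after} the induction on $d$ is complete does one learn a posteriori that the minimal-degree component of $X_3$ is a plane, because the residual surface $X_3-\Gamma$ decomposes into planes by the inductive hypothesis.

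A secondary divergence: your plan invokes the planar Bocci--Chiantini theorem on the trace $F_1|_\Pi$, double along $Z\cap\Pi$. The paper never uses the $n=2$ case in its proof of the $n=3$ case; it inducts on $d$ within dimension $3$. Your trace argument would anyway need the hypotheses of the planar theorem to hold for $Z\cap\Pi$ (the right values of $\alpha$ for the trace ideal), which you do not verify and which need not hold --- $Z\cap\Pi$ could even be small or empty. By contrast, the bookkeeping you correctly identify for the residual set $Z'=Z\setminus\Gamma$ is sound and is exactly what the paper does: $\alpha(\cali_{Z'})\geq d-\gamma-2$ (else adding $\Gamma$ to a small-degree surface through $Z'$ contradicts $\alpha(\cali)=d-2$) together with $\alpha(\cali_{Z'}^{(3)})\leq d-\gamma$ and the jump lemma forces equality throughout. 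So your inductive skeleton is viable, but the proof cannot be completed along your route without replacing the unsupported ``some plane contains many points of $Z$'' step by a reducibility argument of the above kind.
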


   The assumption on the ideal $\cali$ in the theorem amounts to the
   two equalities
   \be
      \alpha(\cali^{(2)})&=&\alpha(\cali)+1 \\
      \alpha(\cali^{(3)})&=&\alpha(\cali^{(2)})+1
   \ee
   and one
   might be tempted to relax the assumptions
   to only one of them.
   In Sect.~\ref{sect:further}
   we provide examples showing, however, that neither is
   sufficient by itself to get the conclusion of the theorem.

   Star configurations are interesting objects of study in their own right.
   They are defined in \cite{GHM} as unions of linear subspaces of fixed
   codimension $c$ in projective space $\P^n$ that result as subspaces
   where exactly $c$ of a fixed finite set of general hyperplanes in $\P^n$
   intersect. The case described in Conjecture \ref{conj:main} corresponds
   thus to the $c=n$ situation. It is natural to wonder if the following
   further generalization of Conjecture \ref{conj:main} might be true:
   If $Z$ is a finite collection of linear subspaces of codimension
   $c\le n$
   in $\P^n$ with the radical ideal $\cali$ and such that
   $$d=\alpha(\cali^{(c)})=\alpha(\cali)+c-1\,,$$
   then $Z$ is either contained in a hyperplane or forms a star
   configuration of codimension $c$ subspaces. The
   recent preprint~\cite{Jan13} of Janssen deals with lines in $\P^3$
   and shows that such a simple generalization would be too naive.
   Nevertheless we expect that there are some undiscovered patterns
   lurking behind, and we hope to come back to this subject in the near future.

   Throughout the paper we work over the complex numbers, and
   we use standard notation in algebraic geometry as in
   \cite{PAG}.


\section{Initial degrees of symbolic powers}

\begin{definition}[Symbolic power]\label{def:symb power}
   Let $\cali$ be a homogeneous ideal in the polynomial ring $R=\C[\P^n]$.
   For a positive integer $k$, the ideal
   $$\cali^{(k)} = R \cap \left( \bigcap_{\mathfrak{p} \in \Ass(I)} \cali^{k} R_{\mathfrak{p}} \right),$$
   where the intersection is taken in the field of fractions of $R$,
   is the $k$-th \emph{symbolic power} of $\cali$.
\end{definition}

\begin{definition}[Differential power]\label{def:diff power}
   Let $\cali$ be a radical homogeneous ideal and let $V\subset\P^n$
   be the corresponding subvariety.
   For a positive integer $k$, the ideal
   $$\cali^{\langle k\rangle}=\bigcap\limits_{P\in V}\m_P^k\,,$$
   where $\m_P$ denotes the maximal ideal defining the point $P\in\P^n$,
   is the $k$-th \emph{differential power} of $\cali$.
\end{definition}

   In other words, the $k$-th differential power of an ideal consists of all homogeneous
   polynomials vanishing to order at least $k$ along the underlying variety.
   For radical ideals these two concepts fall together due to a result
   of Nagata and Zariski, see \cite[Theorem 3.14]{Eis95} for prime ideals
   and \cite[Corollary 2.9]{SidSul09} for radical ideals:

\begin{theorem}[Nagata, Zariski]\label{thm:NZ}
   If $\cali$ is a radical ideal in a polynomial ring over an algebraically
   closed field, then
   $$\cali^{\langle k\rangle}=\cali^{(k)}$$
   for all $k\geq 1$.
\end{theorem}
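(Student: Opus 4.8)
The plan is to reduce the statement to the case of a prime ideal, where it becomes the classical Zariski--Nagata description of symbolic powers via differential conditions, and then to read off both inclusions from that description.

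First I would reduce to primes. Since $\cali$ is radical, it is the intersection $\cali=\mathfrak{p}_1\cap\cdots\cap\mathfrak{p}_s$ of the prime ideals of the irreducible components $V_1,\ldots,V_s$ of $V$, and these are exactly the associated primes, $\Ass(\cali)=\{\mathfrak{p}_1,\ldots,\mathfrak{p}_s\}$. Localizing at $\mathfrak{p}_i$ renders every $\mathfrak{p}_j$ with $j\neq i$ the unit ideal (a minimal prime is not contained in another), so $\cali^{(k)}=\bigcap_{i=1}^s\mathfrak{p}_i^{(k)}$ straight from Definition \ref{def:symb power}. On the differential side the distribution is immediate: writing $V=\bigcup_i V_i$ in Definition \ref{def:diff power} gives $\cali^{\langle k\rangle}=\bigcap_{i=1}^s\mathfrak{p}_i^{\langle k\rangle}$. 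Hence it suffices to prove $\mathfrak{p}^{(k)}=\mathfrak{p}^{\langle k\rangle}$ for a single prime $\mathfrak{p}$ with variety $W=V(\mathfrak{p})$, i.e. that $\mathfrak{p}^{(k)}=\bigcap_{P\in W}\m_P^k$. This reduction is precisely the passage from prime to radical ideals recorded in \cite[Corollary 2.9]{SidSul09}.

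The core input is the differential characterization of the symbolic power of a prime: over an algebraically closed (more generally perfect) field, a polynomial $f$ lies in $\mathfrak{p}^{(k)}$ if and only if $D(f)\in\mathfrak{p}$ for every differential operator $D$ on $R$ of order strictly less than $k$. This is the mechanism behind \cite[Theorem 3.14]{Eis95}, and it is the step I expect to be the main obstacle: its proof rests on the regularity of the polynomial ring, which guarantees that the differential operators of bounded order are generated as a module by the monomials in the partials $\partial/\partial x_i$, together with the generic smoothness of $W$, which is where the hypothesis on the field is used.

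Granting this description, both inclusions fall out uniformly, with no separate treatment of the singular points of $W$. The one elementary ingredient is that, because every point $P$ is a smooth point of the ambient $\P^n$, vanishing to order at least $k$ at $P$ is equivalent to $D(f)\in\m_P$ for all operators $D$ of order below $k$; in local coordinates this just says that all partials of $f$ up to order $k-1$ vanish at $P$. For $f\in\mathfrak{p}^{(k)}$ and any $P\in W$ we have $\m_P\supseteq\mathfrak{p}$, so $D(f)\in\mathfrak{p}\subseteq\m_P$ for all such $D$, whence $f\in\m_P^k$ and $\mathfrak{p}^{(k)}\subseteq\bigcap_{P\in W}\m_P^k$. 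Conversely, if $f\in\m_P^k$ for every $P\in W$, then for each operator $D$ of order below $k$ the function $D(f)$ vanishes at every point of $W$; since $\mathfrak{p}$ is radical it is the full ideal of $W$, so $D(f)\in\mathfrak{p}$, and the characterization gives $f\in\mathfrak{p}^{(k)}$. This establishes the equality for all $k\geq 1$.
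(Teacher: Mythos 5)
The paper offers no proof of this statement beyond its two citations --- \cite[Theorem 3.14]{Eis95} for the prime case and \cite[Corollary 2.9]{SidSul09} for radical ideals --- and your argument is exactly that route made explicit: your localization argument showing $\cali^{(k)}=\bigcap_i\mathfrak{p}_i^{(k)}$ together with the trivial distribution of differential powers over components is precisely the content of the Sidman--Sullivant reduction, while the differential characterization of $\mathfrak{p}^{(k)}$ is the same black box the paper cites from Eisenbud. The proof is correct as it stands (one cosmetic caveat: the gloss that order-$\geq k$ vanishing means ``all partials of $f$ up to order $k-1$ vanish at $P$'' is characteristic-zero-specific, but with differential operators taken in the divided-power sense --- consistent with your hedge about perfect fields --- the argument is fine in general, and the paper works over $\C$ in any case).
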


   We will make use of the following observation on symbolic powers.

\begin{lemma}\label{lem:jump must be 1 at least}
   Let $\cali$ be an arbitrary radical homogeneous ideal. Then
   we have the inequality
   $$\alpha(\cali^{(k+1)}) - \alpha(\cali^{(k)})\geq 1$$
   for all $k\geq 1$.
\end{lemma}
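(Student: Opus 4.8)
The plan is to translate the statement about symbolic powers into one about differential powers, where the inequality becomes transparent via differentiation. By Theorem~\ref{thm:NZ} we may replace each $\cali^{(m)}$ with $\cali^{\langle m\rangle}$, so that $\alpha(\cali^{(m)})$ is the least degree of a nonzero homogeneous polynomial vanishing to order at least $m$ along the variety $V$ defined by $\cali$. I would fix $k\geq 1$, set $d:=\alpha(\cali^{(k+1)})$, and choose a nonzero homogeneous $F\in\cali^{(k+1)}=\cali^{\langle k+1\rangle}$ of degree $d$. The goal is then to produce a nonzero polynomial of degree $d-1$ lying in $\cali^{(k)}=\cali^{\langle k\rangle}$, which forces $\alpha(\cali^{(k)})\leq d-1$ and hence the claimed inequality.

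The key construction is differentiation. For any point $P\in V$, the local order of vanishing is measured by the powers $\m_P^{\,m}$, and the elementary fact I would invoke is that if a polynomial vanishes to order at least $m$ at $P$, then each of its first partial derivatives vanishes to order at least $m-1$ at $P$. Applying this uniformly over all $P\in V$, every partial derivative $\partial F/\partial x_i$ lies in $\cali^{\langle k\rangle}=\cali^{(k)}$ and has degree $d-1$. It remains to guarantee that at least one such derivative is nonzero: since $V$ is nonempty, $F$ is not a nonzero constant, so $d\geq 1$, and Euler's identity $d\cdot F=\sum_i x_i\,\partial F/\partial x_i$ (valid because we work in characteristic zero) shows that the derivatives cannot all vanish, as otherwise $F=0$.

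Putting these together yields a nonzero element of $\cali^{(k)}$ of degree $d-1=\alpha(\cali^{(k+1)})-1$, whence $\alpha(\cali^{(k)})\leq\alpha(\cali^{(k+1)})-1$, which is exactly the asserted inequality.

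I expect the only genuinely delicate point to be the justification that differentiation lowers the vanishing order by at most one at each point of $V$ simultaneously; once this local statement is in hand, the global conclusion follows immediately from the differential-power description of symbolic powers. The reliance on characteristic zero (through Euler's identity and through the nonvanishing of some partial derivative) is essential and is consistent with the standing assumption that we work over $\C$.
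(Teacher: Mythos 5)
Your proof is correct and follows essentially the same route as the paper: both pass from symbolic to differential powers via Theorem~\ref{thm:NZ} and use the fact that a partial derivative of a polynomial vanishing to order at least $k+1$ along $Z$ vanishes to order at least $k$. Your write-up is in fact slightly more careful than the paper's one-line argument, since you explicitly verify via Euler's identity (using characteristic zero) that some partial derivative is nonzero, a point the paper leaves implicit.
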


\begin{proof}
   Let $Z$ be the subscheme of $\P^n$ defined by $\cali$.
   The claim of the lemma follows immediately from the interpretation of symbolic
   powers as differential powers, Theorem \ref{thm:NZ},
   and the observation that if a polynomial $f$ vanishes along $Z$
   to order at least $k+1$, then any of its partial derivatives vanishes
   along $Z$ to order at least $k$.
\end{proof}


\section{The $\P^2$ case revisited}

   As a warm-up,
   we give here a new proof of the result of Bocci and Chiantini.
   This proof has the advantage that it does not make use of the
   Pl\"ucker formulas.

\begin{theorem}[Bocci-Chiantini]
   Let $Z$ be a finite set of points in the projective plane $\P^2$
   and let $\cali$ be its radical ideal. If
   $$d=\alpha(\cali^{(2)})=\alpha(\cali)+1\,,$$
   then
   either $Z$ consists of collinear points
   or $Z$ is the set of all intersection points
   of a general configuration of $d$ lines in $\P^2$.
\end{theorem}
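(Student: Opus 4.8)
Write $m:=\alpha(\cali)$, so that the hypothesis reads $\alpha(\cali^{(2)})=m+1=:d$. If $m=1$, then $Z$ lies on a line and we are in the first alternative of the statement. So I would assume from now on that $m\geq 2$ and aim to prove that $Z$ is a star configuration. Fix once and for all a nonzero $G\in(\cali^{(2)})_d$; geometrically, $\{G=0\}$ is a plane curve of degree $d=m+1$ that is singular (multiplicity $\geq 2$) at every point of $Z$, and by minimality of $d$ there is no such curve of degree $m$.

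\textbf{The derivative idea and the Bézout bound.} The point that replaces the Plücker formulas is the interpretation of symbolic powers as differential powers (Theorem~\ref{thm:NZ}): since $G\in\cali^{(2)}=\cali^{\langle 2\rangle}$, each of its first partial derivatives lies in $\cali^{\langle 1\rangle}=\cali$ and has degree $m$, hence lies in $(\cali)_m$. Thus $(\cali)_m$ already contains minimal curves cut out inside the singular scheme of $G$, and by Euler's relation not all three partials vanish. I would use this supply to pick $F\in(\cali)_m$ (a generic linear combination of the partials, after a generic change of coordinates) having no component in common with $G$. For such $F$, Bézout gives $md=\sum_P I_P(F,G)\geq\sum_{P\in Z}I_P(F,G)\geq 2\,|Z|$, because $\mathrm{mult}_P(G)\geq 2$ and $\mathrm{mult}_P(F)\geq 1$ at each $P\in Z$; hence $|Z|\leq\binom{d}{2}$. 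A companion restriction estimate will also be useful: for a line $\ell\not\subset\{G=0\}$ one has $2\,|Z\cap\ell|\leq\deg(G\vert_\ell)=d$, so no line contains more than $d/2$ points of $Z$ unless it is a component of $G$.

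\textbf{Structure of $G$ and general position.} The heart of the argument is to show that $\{G=0\}$ is a \emph{reduced} union of $d$ distinct lines in general position, and that $Z$ is exactly their $\binom{d}{2}$ pairwise intersection points. On the one hand, $Z\subseteq\mathrm{Sing}(G)$; on the other hand, decomposing a reduced plane curve into its irreducible components and applying Bézout componentwise shows that a reduced curve of degree $d$ has at most $\sum_{i<j}\deg C_i\deg C_j+\sum_i\binom{\deg C_i-1}{2}\leq\binom{d}{2}$ singular points, with equality exactly when all components are lines, no three of which are concurrent. Matching this with the bound $|Z|\leq\binom{d}{2}$ and forcing equality then pins down $G$ as $d$ general lines and $Z$ as their nodes; restricting any other degree-$d$ element of $\cali^{(2)}$ to each of these lines forces it to contain the same $d$ lines, which yields the final decomposition assertion in the statement.

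\textbf{Main obstacle.} The delicate point is precisely the reducedness/line-ness of $G$: when $\{G=0\}$ carries a multiple component $H^2$ or a component of degree $\geq 2$, the factor $H$ divides all partials of $G$, so $F$ and $G$ are \emph{forced} to share a component and the naive Bézout count collapses. I would dispose of this by peeling off any common component, renormalizing the degrees, and tracking how the initial degrees of the residual ideals drop — using Lemma~\ref{lem:jump must be 1 at least} to control the bookkeeping — until a coprime pair is reached; the line-restriction estimate then rules out the surviving high-degree or multiple components. Upgrading ``$d$ lines'' to ``$d$ lines in general position'' (no three concurrent) is the other place requiring care, and it is exactly where the equality case $|Z|=\binom{d}{2}$ does the work.
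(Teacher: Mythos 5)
Your strategy is genuinely different from the paper's, and its skeleton is sound, but two places need repair before it is a complete proof. First, a logical slip: as written you only establish \emph{upper} bounds --- $|Z|\le\binom{d}{2}$ from B\'ezout with a derivative curve, and the classical bound on singular points of a reduced curve --- and two upper bounds cannot ``force equality''. The missing ingredient is the lower bound $|Z|\ge\binom{d}{2}$, which follows in one line from $\alpha(\cali)=d-1$: forms of degree $d-2$ form a $\binom{d}{2}$-dimensional space, so fewer points would admit a nonzero form of degree $d-2$ vanishing on $Z$ (the paper makes exactly this count in Lemma~\ref{lem:reduction}). Once this is added, your equality analysis of $\#\mathrm{Sing}(G)\le\sum_i\binom{k_i-1}{2}+\sum_{i<j}k_ik_j=\binom{d}{2}-\sum_i(k_i-1)$ correctly pins down a \emph{reduced} $G$ as $d$ distinct lines, no three concurrent, with $Z$ all the nodes; it also makes your derivative/B\'ezout bound $|Z|\le\binom{d}{2}$ strictly redundant, so the partials really only matter through the reducedness issue. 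Second, the non-reduced case is left as a sketch, and as phrased (``peel until a coprime pair is reached,'' then the line-restriction estimate) it is not yet an argument; but the tools you name do suffice, and in a single step rather than an iteration: suppose $H^a\mid G$ with $H$ irreducible of degree $h$ and $a\ge 2$, and set $Z'=Z\setminus H$. If $Z'=\emptyset$ then $Z\subset H$ gives $\alpha(\cali)\le h\le d/2<d-1$ for $d\ge 3$, a contradiction. If $Z'\ne\emptyset$, then a curve of degree $d-2-h$ through $Z'$ would, together with $H$, give a curve of degree $d-2$ through all of $Z$, so $\alpha(\cali_{Z'})\ge d-1-h$; on the other hand $G/H^a$ lies in $\cali_{Z'}^{(2)}$ and has degree $d-ah\le d-2h$, so Lemma~\ref{lem:jump must be 1 at least} forces $\alpha(\cali_{Z'})\le d-2h-1$, whence $h\le 0$ --- a contradiction. (Your restriction argument for the final decomposition assertion is fine, since $2(d-1)>d$ for $d\ge 3$.)

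With these patches your proof is correct and takes a route quite unlike the paper's: the paper first reduces to $\#Z=\binom{d}{2}$, then constructs for each $P\in Z$ an auxiliary curve $W_P$ of degree $d-2$ through $Z\setminus\{P\}$ missing $P$, and shows by two B\'ezout steps (a forced common component, then a gcd-residual count yielding $e=0$) that $W_P$ is contained in $X_2$; hence $X_2-W_P$ is a conic with a double point at $P$, i.e.\ two lines of $X_2$ meet at every point of $Z$, and counting lines gives the star configuration. The paper's route is uniform --- no case split on reducedness, since the two configuration lines through each point are produced directly --- while yours buys a transparent global count via the classical singular-point bound for reduced curves and showcases the differential-power interpretation, at the price of a separate exclusion of multiple components; notably, that exclusion is exactly the residual-ideal-plus-jump-lemma mechanism the paper itself deploys in its dimension-three induction.
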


\begin{proof}
   If $d=2$, then we are done. So we assume $d\geq 3$.

   By Lemma~\ref{lem:reduction} below
   we may assume that $Z$
   consists of exactly $\binom{d}{2}$ points.
   Let $X_2$ be a divisor of degree $d$ that is singular in all points of $Z$.
   Let $P$ be one of the points in $Z$.
   Then there exists a divisor $W_P$ of degree $d-2$ vanishing at all
   points in $Z\setminus\{P\}$ (and not vanishing at $P$).

   We claim that $W_P$ is contained in $X_2$.
   To see this, we begin by
   showing that they must
   have a common component. Indeed, this follows from B\'ezout's Theorem, since
   otherwise we would get
   $$d(d-2)=X_2\cdot W_P\geq 2\left(\binom{d}{2}-1\right)=d(d-1)-2\,,$$
   which is equivalent to $d\leq 2$
   and contradicts
   our initial assumption
   in this proof.

   Let now $\Gamma$ be the greatest common divisor of $X_2$ and $W_P$,
   and let $e$ be the degree of the divisor
   $W_P'=W_P-\Gamma$
   (so that $\deg(\Gamma)=d-2-e$).
   There must be at least $\binom{e+2}{2}-1$
   points from $Z\setminus\{P\}$ on $W_P'$ (otherwise there would
   be a pencil of such divisors
   $W_P'$ and one could choose
   an element in this pencil passing
   through $P$, but then $W_P'+\Gamma$ would be an element of degree $d-2$ in $\cali$
   contradicting the assumption on $\alpha(\cali)$).

   Then we intersect again $X_2-\Gamma$ with $W_P'$ and obtain
   $$(e+2)e=(X_2-\Gamma)\cdot W_P'\geq 2\left(\binom{e+2}{2}-1\right)=(e+2)(e+1)-2\,,$$
   which gives $e=0$.

   It follows that $X_2-W_P$ is a divisor of degree $2$ with a double point at $P$.
   Hence $X_2$ contains two lines intersecting in $P$. This holds for every point $P\in Z$.
   Since $X_2$ can contain at most $d$ lines, we see that this is only possible
   if $Z$ consists of the intersection points of a general configuration of $d$ lines.
\end{proof}


\section{A reduction result}
   We begin by a lemma concerning star configurations of points.
   We include the proof, since we were not able to trace it down
   in the literature.

\begin{lemma}\label{lem:the only one}
   Let $Z$ be a star configuration of points defined by hyperplanes
   $H_1,\ldots,H_d$ in $\P^n$. For $d\geq n+1$ the union
   $$H_1\cup\ldots\cup H_d$$
   is the only hypersurface $F$ of degree $d$ with the property
   \begin{equation}\label{eq:mult cond}
      \mult_PF\geq n\;\mbox{ for all }\; P\in Z.
   \end{equation}
\end{lemma}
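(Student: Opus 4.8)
First, observe that the union $H_1\cup\dots\cup H_d$ is cut out by the product $\ell_1\cdots\ell_d$ of the linear forms $\ell_i$ defining the $H_i$, and that it satisfies \eqnref{eq:mult cond}: at each $P\in Z$ exactly $n$ of the factors vanish, and these are linearly independent, so $\mult_P(\ell_1\cdots\ell_d)=n$. Since the $\ell_i$ are pairwise coprime, their product has degree $d=\deg F$. Hence it suffices to prove that an \emph{arbitrary} $F$ satisfying \eqnref{eq:mult cond} is divisible by each $\ell_i$: then $\ell_1\cdots\ell_d\mid F$, and equality of degrees forces $F=c\,\ell_1\cdots\ell_d$, giving uniqueness in one stroke.

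I would establish this divisibility by induction on the dimension, in the following slightly strengthened form. Claim $Q(m)$: for every $s\ge m+1$ and every integer $a\ge m$, if $E_1,\dots,E_s$ are general hyperplanes in $\P^m$ with star configuration $W$, and $G$ is a form with $\mult_P G\ge a$ for all $P\in W$ and $\deg G\le s-m+a$, then each $E_i$ divides $G$. The lemma is the case $m=n$, $s=d$, $a=n$, for which the degree bound reads $\deg F\le d$. The base case $m=1$ is immediate: $W$ consists of the $s$ points themselves, and $\mult_P G\ge a\ge1$ simply says $G$ vanishes at each of them, so each corresponding linear factor divides $G$.

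For the inductive step fix $i=1$ and restrict to $E_1\cong\P^{m-1}$. If $\ell_1\nmid G$, then $G|_{E_1}\ne0$; its multiplicity at each point of the star configuration $W_1$ cut out on $E_1$ by the $s-1$ traces $E_j\cap E_1$ is still $\ge a$, and $\deg(G|_{E_1})\le s-m+a=(s-1)-(m-1)+a$. Thus $Q(m-1)$ applies (with $s-1\ge m$ hyperplanes and $a\ge m-1$) and shows that each trace divides $G|_{E_1}$. As the $s-1$ traces are distinct hyperplanes of $E_1$, their product has degree $s-1$ and divides $G|_{E_1}$, so $G|_{E_1}=(\text{product})\cdot G''$ with $\deg G''\le a-m+1$. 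At a point $P\in W_1$ exactly $m-1$ of the traces pass through $P$ and are independent there, so the product has multiplicity exactly $m-1$ at $P$; comparing with $\mult_P(G|_{E_1})\ge a$ gives $\mult_P G''\ge a-m+1$. Hence $G''$ is a nonzero form whose multiplicity at every point of $W_1$ equals its degree $a-m+1\ge1$. To rule this out I would use two facts. First, $W_1$ spans $\P^{m-1}$, since it contains $m$ points in general linear position, namely the vertices of the simplex cut out by any $m$ of the hyperplanes $E_2,\dots,E_s$. Second, for a nonzero form the locus where its multiplicity equals its degree is a \emph{linear} subspace: it is the common zero set of the $(\deg-1)$st order partial derivatives, which are linear forms, and Euler's relation shows that all lower-order derivatives then vanish there as well. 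The equal-multiplicity locus of $G''$ thus contains the spanning set $W_1$, hence equals $\P^{m-1}$, which is impossible for a form of positive degree. This contradiction yields $\ell_1\mid G$, and by symmetry each $\ell_i\mid G$, completing the induction.

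The main obstacle is precisely the mismatch $\deg F=d>n=\mult_P F$: because $F$ need not be a cone, the naive multiplicity-versus-degree estimates (of the kind used for plane curves via B\'ezout) do not by themselves pin down $F$. The dimension induction resolves this by peeling off one hyperplane at each restriction, reducing the excess degree step by step until the residual factor $G''$ genuinely satisfies $\deg G''=\mult_P G''$; at that point the linearity of the equal-multiplicity locus, together with the spanning property of star configurations, delivers the contradiction.
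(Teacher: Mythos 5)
Your proof is correct, and while it shares the paper's outer skeleton --- induction on dimension by restricting to one configuration hyperplane, using that the traces cut out a star configuration of $s-1$ hyperplanes there --- the engine of the induction step is genuinely different. The paper keeps the statement as pure uniqueness: assuming some $H_i$ is not a component of $F$, it restricts to get $G = F\cap H_i$ and then considers the \emph{polar system} of $G$, whose members have degree $d-1$ and multiplicity at least $n-1$ along $Z\cap H_i$; since that system has dimension $n-1\ge 2$, it contains many distinct divisors, contradicting the inductive uniqueness statement. Its base case is $\P^2$, settled by a B\'ezout computation. You instead strengthen the inductive statement to the divisibility claim $Q(m)$ with a free multiplicity parameter $a$ and degree slack $\deg G\le s-m+a$, factor the product of the traces out of $G|_{E_1}$, and kill the residual $G''$ via the fact that the locus $\{P:\mult_P G''=\deg G''\}$ of a nonzero form is a \emph{linear} subspace (Euler's relation, characteristic $0$), which cannot contain the spanning set $W_1$. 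This buys you three things: the base case collapses to the trivial $m=1$ instead of a planar B\'ezout argument; you avoid the paper's dimension count for the polar system (which forces it to treat $n=2$ separately); and $Q(n)$ is strictly stronger than the lemma --- with $a=n$ and degree bound $d$ it also shows that no nonzero form of degree \emph{less} than $d$ has multiplicity $n$ along $Z$, i.e.\ $\alpha(\cali^{(n)})=d$ for star configurations, a fact the paper does not extract from this lemma. Two spots merit one explicit line each, though both are immediate: first, the traces on $E_1$ again form a general configuration (if $m$ of them met at a point, then $m+1$ of the original hyperplanes would meet in $\P^m$, contradicting generality); second, you only derived $\deg G''\le a-m+1$, so before asserting that multiplicity \emph{equals} degree you should note that $\deg G''<a-m+1$ together with $\mult_P G''\ge a-m+1$ would force $G''=0$, contrary to $\ell_1\nmid G$, whence $\deg G''=a-m+1$ exactly.
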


\begin{proof}
   We proceed by induction on the dimension $n\geq 2$.
   The initial case of $\P^2$ is simply dealt with by
   a B\'ezout type argument. Indeed, assuming that there
   exists a curve $F$ of degree $d$ passing through all points in $Z$
   with multiplicity $\geq 2$ and taking a configuration
   line $H_i$, B\'ezout's theorem implies that $H_i$ is
   a component of $F$. Since this holds for all lines
   in the configuration and $\deg(F)=d$, we are done.

   For the induction step we assume that the lemma
   holds for dimension $n-1$ and all $d\geq n$. We want
   to conclude that it holds for $\P^n$ and all $d\geq n+1$.
   Of course we may assume that $n\geq 3$.

   To this end let $F$ be a hypersurface of degree $d$ in $\P^n$
   satisfying \eqnref{eq:mult cond}. Suppose that there
   exists a hyperplane $H$ among $H_1,\ldots,H_d$, which is not
   a component of $F$. Then the restriction $G=F\cap H$ is a hypersurface
   of degree $d$ in $H\simeq\P^{n-1}$ with $\mult_PG\geq n$
   for all $P\in Z_H=Z\cap H$. Note that $Z_H$ is itself
   a star configuration of points in $H$, defined by hyperplanes
   obtained as intersections $H_i\cap H$. So it is a star
   configuration of $d-1$ hyperplanes in $\P^{n-1}$.
   The polar system of $G$ (i.e., the linear
   system defined by all first order derivatives of the
   equation of $G$) is of dimension $n-1\geq 2$,
   and every element $K$ in this system
   satisfies
   $$\mult_PK\geq n-1\quad\mbox{ for all } P\in Z_H.$$
   This contradicts the induction assumption.
\end{proof}

   The following lemma allows one to assume $\#Z=\binom dn$ when
   proving the theorem (or when working on the conjecture).

\begin{lemma}\label{lem:reduction}
   Suppose that the set $Z\subset\P^n$ satisfies the assumptions
   of Conjecture~\ref{conj:main}
   and that $\alpha(I)\ge 2$.
   Then there is a subset $W\subset
   Z$ with the following properties:
   \begin{itemize}
   \item[(i)]
      $W$ is of cardinality $\binom dn$.
   \item[(ii)]
      For the
      ideal $\calj$ of $W$ we have
      $\alpha(\calj^{(k)})=\alpha(\cali^{(k)})$
      for $k=1,\dots,n$.
   \item[(iii)]
      If $W$ is a star configuration, then $W=Z$.
   \end{itemize}
\end{lemma}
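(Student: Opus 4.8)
The plan is to prove Lemma~\ref{lem:reduction} by carefully selecting a subset $W\subset Z$ of the correct cardinality $\binom dn$ while controlling all the initial degrees simultaneously. First I would fix the key numerics: the hypothesis says $\alpha(\cali^{(n)})=\alpha(\cali)+n-1=:d$, and by Lemma~\ref{lem:jump must be 1 at least} each of the $n-1$ jumps in the chain $\alpha(\cali)\le\alpha(\cali^{(2)})\le\cdots\le\alpha(\cali^{(n)})$ is at least $1$, so in fact \emph{every} jump equals exactly $1$: we have $\alpha(\cali^{(k)})=\alpha(\cali)+k-1=d-n+k$ for all $k=1,\dots,n$. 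This rigidity is what makes the whole argument possible, since it pins down the Hilbert-function-type data at every level.

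The core of the proof is a removal/pruning argument. I would show that if $\#Z>\binom dn$, then one can delete a point without changing any of the numbers $\alpha(\cali^{(k)})$ for $1\le k\le n$, and then iterate. To do this, suppose $P\in Z$ and let $Z'=Z\setminus\{P\}$ with ideal $\cali'$; clearly $\alpha((\cali')^{(k)})\le\alpha(\cali^{(k)})$ since $\cali^{(k)}\subseteq(\cali')^{(k)}$. The point is to guarantee that deleting $P$ does \emph{not} decrease any $\alpha((\cali')^{(k)})$ below $d-n+k$. The natural mechanism: a minimal-degree hypersurface realizing $\alpha(\cali)$ on $Z$ is a hyperplane union or, in the star case, is forced (via Lemma~\ref{lem:the only one}) to be the product of the $d$ configuration hyperplanes; so as long as $Z$ is \emph{not yet} a star configuration and still has more than $\binom dn$ points, there is genuine slack allowing a point to be dropped while the initial degree at each level is maintained. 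Dimension-counting (the space of degree-$(d-n+k-1)$ forms vanishing to order $\ge k$ at the points) shows that one extra point beyond $\binom dn$ can be removed without forcing any $\alpha$ to drop, because the linear conditions imposed by $P$ are not independent from those of the remaining points at the relevant degree. I would make this precise by comparing the codimension of the vanishing conditions of $Z$ versus $Z'$ in the degree-$(\alpha(\cali^{(k)})-1)$ piece.

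The hardest step will be part (iii), that a star configuration cannot be properly shrunk, combined with the simultaneous control at \emph{all} levels $k=1,\dots,n$ rather than just the top level $k=n$. For (iii): if $W\subsetneq Z$ and $W$ is already a star configuration of $\binom dn$ points cut out by $d$ hyperplanes, then by Lemma~\ref{lem:the only one} the unique degree-$d$ hypersurface with multiplicity $\ge n$ along $W$ is $H_1\cup\cdots\cup H_d$; but $Z$ also satisfies $\alpha(\cali^{(n)})=d$, so the same product must vanish to order $\ge n$ at every point of $Z$, forcing every point of $Z$ to lie on at least $n$ of the $H_i$, hence to be an intersection point of the configuration, hence already in $W$. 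This gives $Z=W$. The delicate bookkeeping is ensuring the removal procedure in (i)--(ii) terminates exactly at $\binom dn$ and never destroys the equality at an \emph{intermediate} symbolic power; I expect to handle this by removing points greedily subject to preserving $\alpha(\cdot^{(n)})=d$, and then using the forced-equality chain together with Lemma~\ref{lem:jump must be 1 at least} to deduce that the lower-level initial degrees are automatically preserved as well.
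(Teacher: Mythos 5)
Your argument for part (iii) is correct and is in fact exactly the paper's (you should just note that Lemma~\ref{lem:the only one} requires $d\geq n+1$, which follows from $\alpha(\cali)\geq 2$). But the core of your plan for (i)--(ii) has a genuine directional error: Lemma~\ref{lem:jump must be 1 at least} propagates lower bounds \emph{upward}, giving $\alpha(\calj^{(k)})\geq\alpha(\calj)+k-1$, never downward. So your final strategy --- ``removing points greedily subject to preserving $\alpha(\cdot^{(n)})=d$, and then using the forced-equality chain \ldots to deduce that the lower-level initial degrees are automatically preserved'' --- does not work: from $\alpha(\calj^{(n)})=d$ the lemma only yields $\alpha(\calj^{(k)})\leq d-n+k$, which is the upper bound you already have for free from $W\subseteq Z$, and gives no lower bound at the intermediate levels. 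A subset can keep the top-level value $d$ while its lower-level initial degrees drop, so this bookkeeping fails exactly where you flagged it as delicate. The surrounding machinery is also shakier than you suggest: the multi-level dimension count comparing codimensions of order-$k$ vanishing conditions in degree $d-n+k-1$ is not straightforward, since at order $k$ each point imposes a block of $\binom{n+k-1}{n}$ linear conditions with no available independence statement; and the appeal to Lemma~\ref{lem:the only one} to produce ``genuine slack'' in the non-star case is a heuristic, not an argument.

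The fix --- and the paper's route --- is to preserve only the \emph{bottom} level, whence everything else is automatic. Since $\alpha(\cali)=d-n+1$ (your numerics in the first paragraph are correct), no nonzero form of degree $d-n$ vanishes along $Z$, so the $\#Z$ linear conditions on the $\binom dn$-dimensional space of degree-$(d-n)$ forms have full rank $\binom dn$; this in particular proves $\#Z\geq\binom dn$, which your pruning scheme implicitly assumes but never establishes. Now simply choose $\binom dn$ points of $Z$ realizing this rank and let $\calj$ be their ideal: by construction $\alpha(\calj)=d-n+1=\alpha(\cali)$, and then the sandwich
$d-n+k\leq\alpha(\calj)+k-1\leq\alpha(\calj^{(k)})\leq\alpha(\cali^{(k)})=d-n+k$
(the middle inequality from Lemma~\ref{lem:jump must be 1 at least}, the right one from $W\subseteq Z$) gives (ii) at every level $k=1,\dots,n$ in one stroke. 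No iterative point removal and no higher-order condition counting is needed; the only independence statement required is the elementary one in degree $d-n$ at order one.
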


\begin{proof}
   To begin with, note that the equality in \eqnref{eq:jumps by 1}
   together with Lemma~\ref{lem:jump must be 1 at least}
   implies $\alpha(\cali^{(k)})=d-n+k$ for $k=1,\dots,n$.

   The assumption $\alpha(\cali)\geq 2$
   implies $d\geq n+1$. Since there is no form of degree $\leq d-n$
   vanishing along $Z$, there must be at least
   \begin{equation}\label{eq:lower bound on s}
      s:= \binom{d}{n}
   \end{equation}
   points in $Z$.

   We choose now exactly $s$ points $P_1,\dots,P_s$ from $Z$ that
   impose independent
   conditions on forms of degree $d-n$. (This can be done, since
   vanishing at each point in $Z$ gives a linear equation on the coefficients
   of a form of degree $d-n$, so that we obtain a system of $\#Z$ linear
   equations of rank $s=\binom{d}{n}$ (which is the maximal
   possible rank). We choose then a subsystem
   of $s$ equations with maximal rank.) Let $W:=\left\{P_1,\dots,P_s\right\}$
   and let $\calj$ be the radical ideal of $W$.
   Since $W\subset Z$, we certainly have
   $$\alpha(\calj^{(k)})\leq \alpha(\cali^{(k)})$$
   for all $k\geq 0$.
   On the other hand we have
   $$\alpha(\calj)=d-n+1=\alpha(\cali)$$
   by the selection of $W$.
   Lemma \ref{lem:jump must be 1 at least} implies then that in fact
   $$\alpha(\calj^{(k)})=\alpha(\cali^{(k)})$$
   for $k=1,\dots,n$.
   This shows that conditions (i) and (ii) are satisfied.

   As for (iii):
   Suppose that
   $W$ is a star configuration.
   By (ii) we have
   $\alpha(I^{(n)})=\alpha(J^{(n)})$,
   hence
   it follows
   from Lemma~\ref{lem:the only one}
   that $W=Z$.
\end{proof}

   Further, we will need the following
   elementary lemma on hypersurfaces that are obtained by taking derivatives.

\begin{lemma}\label{lem:one mult remains}
  Let $X\subset\P^n$ be a hypersurface defined by a polynomial $f$ of degree $d$ with a point $P$ of
  multiplicity $m$ such that $2\leq m<d$. Then there exists a direction $v$ 
  such that the hypersurface $\Lambda$ defined by the directional derivative of $f$
  in direction $v$ has multiplicity~$m$ at $P$.
\end{lemma}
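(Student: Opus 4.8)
The plan is to differentiate $f$ not in a \emph{generic} direction---which would lower the multiplicity at $P$ by one---but in the \emph{radial} direction determined by $P$ itself. After a linear change of coordinates (under which multiplicity is invariant) we may assume $P=[1:0:\cdots:0]$, and we take $v$ to be the vector $(1,0,\ldots,0)$ representing $P$. For this choice the directional derivative $\partial_v f=\sum_{i} v_i\,\partial f/\partial X_i$ reduces to the single polar derivative $\partial f/\partial X_0$, and it is this hypersurface that we propose as $\Lambda$.

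To verify that the radial derivative preserves the multiplicity, I would expand $f$ as a polynomial in $X_0$ with coefficients that are forms in the remaining variables, say $f=\sum_{j} X_0^{\,d-j}\,h_j(X_1,\ldots,X_n)$, where $h_j$ is homogeneous of degree $j$. Dehomogenising at $P$ (that is, setting $X_0=1$) gives the local expansion $g(x):=f(1,x)=\sum_j h_j(x)$, and the hypothesis $\mult_P X=m$ says precisely that $h_j=0$ for $j<m$ while $h_m\neq 0$.

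Differentiating yields $\partial f/\partial X_0=\sum_j (d-j)\,X_0^{\,d-j-1}h_j$, whose dehomogenisation at $P$ is $\sum_j (d-j)\,h_j(x)$. The term of lowest degree here is $(d-m)\,h_m$, which has degree $m$ and is nonzero because the strict inequality $m<d$ forces the scalar $d-m$ to be nonzero. Hence the local order of vanishing of $\partial f/\partial X_0$ at $P$ is exactly $m$, so $\Lambda$ has multiplicity $m$ at $P$, as desired. (Note also that $\partial f/\partial X_0\not\equiv 0$: otherwise $f$ would be a cone with vertex $P$, forcing $m=d$, contrary to assumption.)

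The only genuinely delicate point is the choice of direction, and I expect this to be the crux rather than any calculation. A generic $v$ fails because the lowest-degree part of $\partial_v g$ is $\partial_v h_m$, which typically has degree $m-1$ and does not vanish. The radial direction is distinguished precisely because the structure of the expansion makes each homogeneous piece $h_j$ reappear after differentiation, rescaled by the factor $d-j$; the hypothesis $m<d$ is exactly what guarantees that the leading piece $h_m$ survives this rescaling. No positivity or genericity beyond this is needed, and the argument is insensitive to the ambient dimension~$n$.
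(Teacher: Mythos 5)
Your proof is correct and is essentially the paper's own argument: after normalizing $P=(1:0:\cdots:0)$, both take the derivative $\partial f/\partial x_0$ in the radial direction of $P$, expand $f=\sum_j x_0^{d-j}h_j$ with $h_j$ homogeneous of degree $j$, and observe that the hypothesis $m<d$ keeps the coefficient $d-m$ of the leading form $h_m$ nonzero. Your write-up is somewhat more complete than the paper's terse version, since you also check that $\partial f/\partial x_0\not\equiv 0$ (ruling out the cone case) and explain why a generic direction would drop the multiplicity to $m-1$.
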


\begin{proof}
   After a projective change of coordinates
   we may assume $P=(1:0:\ldots:0)$. Then we can write
   \be
      f(x_0:x_1:\ldots:x_n) & = & x_0^{d-m}g_m(x_1:\ldots:x_n) \\
                            &  & {}+x_0^{d-m-1}g_{m+1}(x_1:\ldots:x_n) \\
                            &  & {}+\ldots \\
                            &  & {}+g_d(x_1:\ldots:x_n)
   \ee
   with homogeneous polynomials $g_i$ of degree $i$ for $i=m,\ldots,d$.
   Since $d>m$, the divisor defined by $\frac{\partial f}{\partial x_0}=0$
   has multiplicity $m$ at $P$.
\end{proof}


\section{Dimension $3$}

   In this section we give the

\begin{proof}[Proof of Theorem \ref{thm:main}]
   We proceed by induction on $d$.
   For $d\le 3$ the statement of the theorem is trivially
   satisfied, so we assume
   $d\ge 4$ now.
   By Lemma~\ref{lem:reduction} we may assume
   the $Z$ is of cardinality $\binom d3$.
   Let $X_3\subset\P^3$ be the divisor defined by a polynomial of degree $d$ in $\cali^{(3)}$.
   We assert that
   $$
      \mbox{$X_3$ is reducible.}
      \eqno(*)
   $$
   To see this, we first note that thanks to $m=3<4\le d$
   there is by Lemma~\ref{lem:one mult remains}
   for any $P\in Z$ a directional derivative surface $\Lambda_P$ of degree $d-1$
   with multiplicity at least $3$ at $P$.
   Arguing by contradiction, we assume that $X_3$ is irreducible, which
   implies that $X_3$ and $\Lambda_P$ intersect properly, i.e., in a curve.
   Adapting the proof of \cite[Proposition 3.1]{HaHu} to dimension $3$,
   we see that the linear system of forms of degree $d-2$ vanishing along $Z$
   has only $Z$ as its base locus.
   (This is due to the fact that the regularity of $\cali$ is $d-2$.)
   We can therefore choose an element $Y$
   in this system that does not contain
   any component of
   the intersection curve of $X_3$ and $\Lambda_P$.
   Then the three surfaces
   $X_3$, $\Lambda_P$ and $Y$ intersect in points only, and we can apply
   B\'ezout's theorem to get
   $$
      d(d-1)(d-2)=X\cdot\Lambda_P\cdot Y\geq 6\left(\binom{d}{3}-1\right)+9
      \,.
   $$
   But this implies $0\geq 3$, a contradiction. So $(*)$ is established.

   Let now $\Gamma$ be an irreducible component of $X_3$ of smallest
   degree. Set $\gamma=\deg\Gamma$ and $X_3'=X_3-\Gamma$.
   Our aim is to apply induction on $X_3'$.
   To this end we consider the set
   $$
      Z'=Z\setminus\Gamma
      \,.
   $$
   It is non-empty, as otherwise $Z$ would be contained in $\Gamma$
   and then $\alpha(\cali)$
   would be less than $d-2$:
   Indeed, we would have $\alpha(I)\le\gamma\le\rounddown{\frac d2}$,
   which is less than $d-2$ if $d\ge 5$; and if $d=4$, then $\gamma=2$,
   so $X_3$ consists of two quadric, which implies that
   it can have only two triple points -- but then $\alpha(I)=1$.

   As $Z'$ is non-empty, there is in particular
   a triple point on $X_3'$, and hence
   $d-\gamma=\deg X_3'\ge 3$.

   We claim that
   \begin{equation}\label{eqn:alpha-Z'}
      \alpha(\cali_{Z'})\ge d-\gamma-2
      \,.
   \end{equation}
   In fact, there is otherwise a surface $S$ of degree $d-\gamma-3$
   passing through $Z'$, and then
   $S+\Gamma$ is a divisor of degree $d-3$ passing through $Z$,
   which contradicts the assumption $\alpha(\cali)=d-2$.

   Next, note that
   \begin{equation}\label{eqn:alpha-Z'-3}
      \alpha(\cali_{Z'}^{(3)})\le\deg X_3'= d-\gamma
      \,.
   \end{equation}
   In fact, as $\Gamma$ does not pass through any of the points
   of $Z'$, we know that $X_3'$ has multiplicity at least three on $Z'$.

   By Lemma~\ref{lem:jump must be 1 at least}, we obtain
   from \eqnref{eqn:alpha-Z'-3} the inequality
   \begin{equation}\label{eqn:alpha-Z'-3-less}
      \alpha(\cali_{Z'})\le d-\gamma-2
   \end{equation}
   and this shows with \eqnref{eqn:alpha-Z'} that equality holds
   in \eqnref{eqn:alpha-Z'-3-less}.
   From \eqnref{eqn:alpha-Z'-3} we see then, again with
   Lemma~\ref{lem:jump must be 1 at least},
   that equality holds in
   \eqnref{eqn:alpha-Z'-3} as well.
   We have thus established that the assumptions of the theorem are
   satisfied for the set $Z'$.
   By induction we conclude therefore that $Z'$ is a star
   configuration and that $X_3'$ decomposes into planes
   or $Z'$ is contained in a hyperplane and then the support
   of $X_3'$ is that hyperplane.
   As $\Gamma$ was chosen of minimal degree,
   it must therefore be a plane as well,
   and hence $X_3$ decomposes entirely into planes.
   We can then run the above induction argument
   for \emph{any} plane component $\Pi$ of $X_3$ to see that the surface
   $X_3-\Pi$ yields a star configuration.
   This shows immediately that there are no triple
   intersection lines among these planes and we conclude
   by just counting points with multiplicity at least $3$
   (in fact exactly $3$) that
   $Z$ is a star configuration.
\end{proof}


\section{Further results and examples}\label{sect:further}

   Recall that the \emph{Waldschmidt constant} of a homogeneous ideal
   $I\subset\P^n$
   is the asymptotic counterpart of the initial degree,
   defined as
   $$
      \hat\alpha(I)=\lim\limits_{k\to\infty}\frac{\alpha(I^{(k)})}{k}=\inf\limits_{k\geq 1}\frac{\alpha(I^{(k)})}{k}
      \,.
   $$
   This invariant is indeed well defined,
   see \cite{BocHar10a} for this fact and some basic properties of
   $\hat\alpha$.
   The Waldschmidt constants are interesting invariants that
   were recently rediscovered and studied by Bocci and Harbourne,
   see e.g. \cite{BocHar10b}.
   While
   Harbourne introduced the notation
   $\gamma(\cali)$, we propose here the notation $\hat{\alpha}(\cali)$,
   as the Waldschmidt constant \emph{is} the asymptotic version
   of the initial degree $\alpha(\cali)$, and the notation is then
   consistent with \cite{dFKL07}.

   We state now a corollary of Theorem \ref{thm:main} dealing with the case when
   there is just one more $\alpha$-jump by $1$.

\begin{corollary}
   Let $Z$ be a finite set of points in projective three-space~$\P^3$ and let
   $\cali$ be the radical ideal defining $Z$. If
   \begin{equation}\label{eq:more jumps by 1}
      d:=\alpha(\cali^{(4)})=\alpha(\cali)+3
      \,,
   \end{equation}
   then
   $\alpha(\cali)=1$, i.e., $Z$ is contained in a single plane in $\P^3$.
\end{corollary}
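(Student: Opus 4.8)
The plan is to feed the hypothesis into Theorem~\ref{thm:main} and then to exclude the star-configuration alternative by a multiplicity estimate. Since
$\alpha(\cali^{(4)})-\alpha(\cali)=3$
is the sum of the three consecutive differences $\alpha(\cali^{(k+1)})-\alpha(\cali^{(k)})$ for $k=1,2,3$, each of which is at least $1$ by Lemma~\ref{lem:jump must be 1 at least}, every one of these differences must equal $1$. In particular $\alpha(\cali^{(3)})=\alpha(\cali)+2$, so the hypothesis~\eqnref{eq:jumps by 1} of Conjecture~\ref{conj:main} holds for $n=3$. Theorem~\ref{thm:main} then gives that either $\alpha(\cali)=1$, which is the assertion, or $Z$ is a star configuration, in which case it is built from $t:=\alpha(\cali^{(3)})=\alpha(\cali)+2$ planes. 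It thus remains to rule out the star-configuration case when $\alpha(\cali)\ge 2$, i.e.\ when $t\ge 4$.

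For this I would prove that a star configuration of $t\ge 4$ planes in $\P^3$ satisfies $\alpha(\cali^{(4)})\ge 2t-2$. Since here $\alpha(\cali^{(4)})=\alpha(\cali)+3=t+1$, this would force $t+1\ge 2t-2$, hence $t\le 3$, contradicting $t\ge 4$. Let $\ell_1,\dots,\ell_t$ define the planes and let $F$ be any surface of degree $e$ with $\mult_PF\ge 4$ for all $P\in Z$. If some plane $H_i$ is not a component of $F$, then $F|_{H_i}$ is a plane curve of degree $e$ with multiplicity $\ge 4$ at each of the $\binom{t-1}{2}$ points of the induced $\P^2$-star configuration cut out by the $t-1$ lines $H_i\cap H_j$; the planar estimate below then yields $e\ge 2(t-1)=2t-2$. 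If instead every $H_i$ divides $F$, write $F=(\ell_1\cdots\ell_t)\,F'$ with $\deg F'=e-t$; since $\ell_1\cdots\ell_t$ has multiplicity exactly $3$ at each point of $Z$, the cofactor $F'$ passes through all of $Z$, so $e-t\ge\alpha(\cali)=t-2$ and again $e\ge 2t-2$.

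The main point will therefore be the planar estimate: for a star configuration $W$ of $\delta$ general lines in $\P^2$ with radical ideal $\calj$ one has $\alpha(\calj^{(4)})\ge 2\delta$. I would prove this by the same restrict-or-factor descent, bottoming out at the second symbolic power. A curve $C$ of degree $e'$ with multiplicity $\ge 4$ at the $\binom{\delta}{2}$ nodes either fails to contain some configuration line as a component --- in which case B\'ezout against that line gives $e'\ge 4(\delta-1)\ge 2\delta$ --- or is divisible by $\ell_1\cdots\ell_\delta$, so that $C=(\ell_1\cdots\ell_\delta)\,C'$ with $C'$ of multiplicity $\ge 2$ at every node; the one-step B\'ezout argument in the case of the second symbolic power gives $\alpha(\calj^{(2)})\ge\delta$, whence $e'\ge\delta+\delta=2\delta$. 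I expect this planar lower bound, which genuinely exploits the star-configuration structure rather than the bare point count, to be the crux; everything else is bookkeeping with Lemma~\ref{lem:jump must be 1 at least} and Theorem~\ref{thm:main}.
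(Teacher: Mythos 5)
Your proof is correct, but it takes a genuinely different route from the paper at the decisive step. After applying Lemma~\ref{lem:jump must be 1 at least} to split the jump of $3$ into three unit jumps (a step the paper leaves implicit) and invoking Theorem~\ref{thm:main}, the paper excludes the star configuration \emph{asymptotically}: it quotes Demailly's inequality $\hat\alpha(\cali)\geq\frac{\alpha(\cali)+n-1}{n}$ and combines it with $\hat\alpha(\cali)\leq\frac{\alpha(\cali^{(n+1)})}{n+1}$, so that $\frac{\alpha(\cali)+2}{3}\leq\frac{\alpha(\cali)+3}{4}$ forces $\alpha(\cali)=1$ in one line. You instead prove an explicit, elementary lower bound: for a star configuration of $t$ planes, $\alpha(\cali^{(4)})\geq 2t-2$, via the restrict-or-factor B\'ezout descent (with the planar input $\alpha(\calj^{(4)})\geq 2\delta$ for $\delta$ general lines, itself proved by the same descent bottoming out at $\alpha(\calj^{(2)})\geq\delta$), which contradicts $\alpha(\cali^{(4)})=t+1$ once $t\geq 4$. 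I checked the details and they hold: the restriction to a non-component plane does not decrease multiplicity, the $t-1$ lines cut on $H_i$ form a genuine line star configuration (three concurrent lines there would put four planes through a point), $\mult_P(\ell_1\cdots\ell_t)=3$ exactly at each $P\in Z$ so the cofactor $F'$ lies in $\cali$ and $\deg F'\geq\alpha(\cali)=t-2$, and $4(\delta-1)\geq 2\delta$ for $\delta\geq 2$. Your bound is in fact sharp, since $(\ell_1\cdots\ell_t)\cdot(\ell_3\cdots\ell_t)\in\cali^{(4)}$ has degree $2t-2$, so you have computed $\alpha(\cali^{(4)})$ for star configurations exactly. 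What each approach buys: the paper's argument is shorter and, as its subsequent remark notes, generalizes immediately to all $n$ (granting Conjecture~\ref{conj:main}), but it imports Demailly's result on Waldschmidt constants as a black box; your argument is self-contained, stays within the B\'ezout toolkit used elsewhere in the paper, and gives sharp numerical information, at the price of more casework and being written specifically for $n=3$ (though the same descent would clearly iterate to higher $n$).
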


\begin{proof}
   Using Theorem \ref{thm:main} we need to exclude the possibility that $Z$
   forms a star configuration. To this end we apply
   in the case $n=3$
   the inequality
   \begin{equation}\label{eq:chudnovsky a la demailly}
      \hat{\alpha}(\cali)\geq \frac{\alpha(\cali)+n-1}{n}
      \,,
   \end{equation}
   which was proved by Demailly \cite[Proposition 6]{Dem82}. The assumptions
   of his result are satisfied by our Theorem \ref{thm:main}.
   Combining \eqnref{eq:chudnovsky a la demailly} with the fact that
   $\hat{\alpha}(\cali)\leq\frac{\alpha(\cali^{(n+1)})}{n+1}$ yields in our situation
   $$\frac{\alpha(\cali)+n-1}{n}\leq \frac{\alpha(\cali)+n}{n+1}\,,$$
   which gives immediately $\alpha(\cali)=1$.
\end{proof}

\begin{remark}[Waldschmidt constant of star configuration]
   Note that there is equality in \eqnref{eq:chudnovsky a la demailly}
   for star configurations of points by \cite[Proof of Theorem 2.4.3]{BocHar10a}.
\end{remark}

\begin{remark}
   If Conjecture~\ref{conj:main} holds for any $n$, then the
   proof of the above corollary shows that
   if $\alpha(\cali^{(n+1)})=\alpha(\cali)+n$, then
   $\alpha(\cali)=1$.
\end{remark}

   We next provide examples showing that
   a single $\alpha$-jump by 1
   is not sufficient in order to get the conclusion of
   Theorem~\ref{thm:main}.

\begin{example}[Kummer surface]\label{ex:kummer surface}
    In this example we show that in general the assumption
    \begin{equation}\label{eq:jump kummer}
       \alpha(\cali^{(2)})=\alpha(\cali)+1
    \end{equation}
    for an ideal $\cali$ of a set of points $Z$ in $\P^3$
    is not sufficient in order to conclude that
    the points in $Z$ are coplanar or form a star configuration.

    To this end, let $X\subset\P^3$ be the classical Kummer surface
    associated with an irreducible principally polarized abelian surface,
    and let $Z$
    be the set of the $16$ double points on $X$. It is well
    known (see e.g.~\cite[Sect.~10.2]{CAV}) that these $16$
    points form a $16_6$ configuration, i.e., there are $16$
    planes $\Pi_i$ in $\P^3$ such that each plane $\Pi_i$
    contains exactly $6$ double points of $X$ (and exactly
    $6$ planes pass through every point in $Z$). We claim that
    $$\alpha(\cali)=3\quad\mbox{and}\quad\alpha(\cali^{(2)})=4\,,$$
    where $\cali$ is the radical ideal of $Z$.

    Granting this for a moment, we see immediately that
    the points in $Z$ are neither coplanar nor form
    a star configuration, whereas the assumption in
    \eqnref{eq:jump kummer} is satisfied.

    Turning to the proof, assume that there exists a
    surface $S$ defined by an element of degree $3$ in $\cali^{(3)}$.
    Let $\Pi$ be one of the $16$ planes $\Pi_i$. Then
    $$1\cdot 3\cdot 4=\Pi\cdot S\cdot X\geq 6\cdot 1\cdot 2\cdot 2$$
    implies that $\Pi$ is a component of $S$. As the same
    argument works for all $16$ planes, we get a contradiction.
    Hence $\alpha(\cali^{(2)})=4$.

    A similar argument excludes the
    possibility that $Z$ is contained in a quadric. We leave the
    details to the reader.
\end{example}

   The following simpler example exhibiting the same phenomenon
   has been suggested by the referee.

\begin{example}
   Let $L_1,L_2,L_3$ be mutually distinct and not coplanar lines in $\P^3$ intersecting
   in a point $P$. Let $A,B \in L_1$, $C,D \in L_2$ and $E\in L_3$ be points
   on these lines different from their intersection point $P$. Obviously
   the set $Z=\left\{A,B,C,D,E\right\}$ is not a star configuration.
   Let $I$ be the radical ideal of $Z$. Then it is elementary to check that
   $$\alpha(I)=2\;\mbox{ and }\; \alpha(I^{(2)})=3.$$
   Note that $5$ is the minimal number of non-coplanar points that can give
   $\alpha(I^{(2)})=\alpha(I)+1$ and not form a star configuration.
\end{example}

   The next example has been also suggested by the referee and replaces
   a much more complicated example of our original draft.

\begin{example}[Five general points in $\P^3$]\label{ex:five points}
    In this example we show that in general the assumption
    \begin{equation}\label{eq:jump five points}
       \alpha(\cali^{(3)})=\alpha(\cali^{(2)})+1
    \end{equation}
    for an ideal $\cali$ of a set of points $Z$ in $\P^3$
    is also not sufficient in order to conclude that
    the points in $Z$ are coplanar or form a star configuration.

    To this end let $Z=\left\{A,B,C,D,E\right\}$ consist
    of $5$ points in general linear position in $\P^3$.
    For the radical ideal $I$ of $Z$ one has then
    $$\alpha(I^{(3)})=\alpha(I^{(2)})+1.$$
    In fact, one has in this case
    $$\alpha(I)=2,\;\; \alpha(I^{(2)})=4\; \mbox{ and }\; \alpha(I^{(3)})=5.$$
\end{example}


\paragraph*{Acknowledgement.}
   We would like to thank Jean-Pierre Demailly for bringing reference \cite{Dem82}
   to our attention. Further, we thank
   Brian Harbourne for helpful discussions.
   We thank also the referee for detailed helpful remarks, comments
   and nice examples for Section \ref{sect:further}.



\bigskip
  Thomas Bauer,
  Fach\-be\-reich Ma\-the\-ma\-tik und In\-for\-ma\-tik,
  Philipps-Uni\-ver\-si\-t\"at Mar\-burg,
  Hans-Meer\-wein-Stra{\ss}e,
  D-35032~Mar\-burg, Germany

  \textit{E-mail address:} \texttt{tbauer@mathematik.uni-marburg.de}

\bigskip
  Tomasz Szemberg,
  Instytut Matematyki UP,
  Podchor\c a\.zych 2,
  PL-30-084 Krak\'ow, Poland

 \textit{E-mail address:} \texttt{tomasz.szemberg@gmail.com}


\end{document}